\documentclass[a4paper,12pt]{article}
\usepackage{xargs,hyperref}
\usepackage[pdftex,dvipsnames]{xcolor}
\usepackage{comment,amsmath,amssymb,amsthm,changepage,pifont,graphicx,faktor,tikz}
\usepackage[english]{babel}
\usetikzlibrary{patterns}
\usepackage[colorinlistoftodos,prependcaption,textsize=tiny]{todonotes}
\newcommandx{\unsure}[2][1=]{\todo[linecolor=red,backgroundcolor=red!25,bordercolor=red,#1]{#2}}
\newcommandx{\info}[2][1=]{\todo[linecolor=OliveGreen,backgroundcolor=OliveGreen!25,bordercolor=OliveGreen,#1]{#2}}
\usepackage[font=small]{caption}

\newtheorem{theorem}{Theorem}[section]

\newtheorem{lemma}[theorem]{Lemma}
\newtheorem{corollary}[theorem]{Corollary}

\newtheorem{definition}[theorem]{Definition}

\theoremstyle{remark}
\newtheorem{remark}[theorem]{Remark}

\DeclareMathOperator{\lw}{lw}
\DeclareMathOperator{\ls}{ls}
\DeclareMathOperator{\conv}{conv}

\DeclareMathOperator{\vol}{vol}
\DeclareMathOperator{\GL}{GL}

\newcommand{\RR}{\mathbb{R}}
\newcommand{\ZZ}{\mathbb{Z}}

\oddsidemargin = 15pt
\evensidemargin = 15pt
\topmargin = 0pt
\textwidth = 425pt
\headheight = 15pt
\headsep = 0pt

\begin{document}

\title{Minimal polygons with fixed lattice width}
\author{F.\ Cools and A.\ Lemmens}
\date{}
\maketitle

\begin{abstract}
\noindent We classify the unimodular equivalence classes of inclusion-minimal polygons with a certain fixed lattice width. 
As a corollary, we find a sharp upper bound on the number of lattice points of these minimal polygons. \\

\noindent \emph{MSC2010:} Primary 52B20, 52C05, Secondary 05E18
\end{abstract}

\section{Introduction and definitions} \label{sec_1}

Let $\Delta\subset \RR^2$ be a non-empty \emph{lattice polygon}, i.e. the convex hull of a finite number of lattice points in $\ZZ^2$, and consider a \emph{lattice direction} $v\in \ZZ^2$, i.e. a non-zero primitive vector. The \emph{lattice width of $\Delta$ in the direction $v$} is $$\lw_v(\Delta)=\max_{P\in\Delta}\langle P,v\rangle-\min_{P\in\Delta}\langle P,v\rangle.$$ The \emph{lattice width} of $\Delta$ is defined as $\lw(\Delta)=\min_v \lw_v(\Delta)$. Throughout this paper we will assume that $\Delta$ is two-dimensional, hence $\lw(\Delta)>0$. A lattice direction $v$ that satisfies $\lw_v(\Delta)=\lw(\Delta)$ is called a \emph{lattice width direction} of $\Delta$.  

Two lattice polygons $\Delta$ and $\Delta'$ are called \emph{(unimodularly) equivalent} if and only if there exists a \emph{unimodular transformation} $\varphi$, i.e. a map of the form 
$$\varphi:\RR^2\to\RR^2:x\mapsto Ax+b, \quad \text{where} \quad A\in \GL_2(\ZZ),\ b\in \ZZ^2,$$ 
such that $\varphi(\Delta)=\Delta'$. Equivalent lattice polygons have the same lattice width. 

The lattice width of a polygon can be seen as a specific instance of the more general notion of \emph{lattice size}, which was introduced in \cite{CaCo2}.
\begin{definition}
Let $X\subset \RR^2$ be a subset with positive Jordan measure. Then the lattice size $\ls_X(\Delta)$ of a non-empty lattice polygon $\Delta$ is the smallest $d\in\ZZ_{\geq 0}$ for which there exists a unimodular transformation $\varphi$ such that $\varphi(\Delta)\subset dX$. 
\end{definition}
Note that $\lw(\Delta)=\ls_X(\Delta)$, where $X=\RR\times[0,1]$. 

This paper is concerned with polygons $\Delta$ that are \emph{minimal} in the following sense: $\lw(\Delta')<\lw(\Delta)$ for each lattice polygon 
$\Delta'\subsetneq \Delta$. Equivalently, a two-dimensional polygon $\Delta$ is minimal if and only if for each vertex $P$ of $\Delta$, we have that $\lw(\Delta_P)<\lw(\Delta)$, where $\Delta_P:=\conv((\Delta\cap\ZZ^2)\setminus\{P\})$.

Our main result is a complete classification of minimal polygons up to unimodular equivalence, see Theorem \ref{thm_clas}. As a corollary, we provide a sharp upper bound on the number of lattice points of these minimal polygons. First, we show in Lemma \ref{lemma2} that each minimal polygon $\Delta$ satisfies $\ls_\square(\Delta)=\lw(\Delta)$, where $\square=\conv\{(0,0),(1,0),(1,1),(0,1)\}$. The latter can also be proven using results on lattice width directions of interior lattice polygons (see \cite[Lemma 5.3]{CCDL}), but we choose to keep the paper self-contained and have provided a different proof. 
Moreover, we use the technical Lemma \ref{newlemma} in the proofs of both Lemma \ref{lemma2} and Theorem \ref{thm_clas}. 

In the joint paper \cite{CCDL} with Castryck and Demeyer, we study the Betti table of the toric surface $\text{Tor}(\Delta)\subset \mathbb{P}^{\sharp(\Delta\cap\ZZ^2)-1}$ for lattice polygons $\Delta$. In particular, we present a lower bound for the length of the linear strand of this Betti table in terms of $\lw(\Delta)$, which we conjecture to be sharp. For showing this conjecture for polygons of a fixed lattice width, it essentially suffices to prove it for the minimal polygons (see \cite[Corollary 5.2]{CCDL}). Hence, Theorem \ref{thm_clas} allows us to check the conjecture using a computer algebra system.

\begin{remark}
Of course, the question of classifying minimal polytopes can also be asked in higher dimensions. For instance, it can be shown that each three-dimensional minimal polytope $\Delta\subset \RR^3$ with $\lw(\Delta)=1$ is equivalent to a tetrahedron of the form 
$$\conv\{(0,0,0),(1,0,0),(0,1,0),(1,y,z)\}$$ with $1\leq y\leq z$ and $\gcd(y,z)=1$. These include the Reeve tetrahedrons (where $y=1$). For comparison, there is only one minimal polygon with lattice width one up to equivalence, namely the standard simplex $\conv\{(0,0),(1,0),(0,1)\}$. 

In all dimensions $k\geq 2$, among the minimal polytopes we will find back the so-called \emph{empty lattice simplices} $\Delta\subset \RR^k$, i.e. convex hulls of $k+1$ lattice points without interior lattice points. If $k\geq 4$, not all empty lattice simplices have lattice width $1$. For more information, see 
\cite{BBBK,HZ,Seb}.
\end{remark}

\section{The classification of minimal polygons}

Throughout this section, we will use the notations from Section \ref{sec_1}. The following result appears already in \cite[Remark following Lemma 5.2]{CaCo1}, but can be proven in a shorter way.  

\begin{lemma} \label{lemma1}
Let $\Delta\subset\RR^2$ be a lattice polygon with $\lw(\Delta)=d$. If $\Delta$ has two linearly independent lattice width directions $v,w\in\ZZ^2$, then $ls_\square(\Delta)=d$.
\end{lemma}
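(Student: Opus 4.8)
The plan is to use the two linearly independent lattice width directions $v$ and $w$ to build a lattice map that squeezes $\Delta$ into a $d \times d$ square. Since both $v$ and $w$ achieve the minimum $\lw_v(\Delta) = \lw_w(\Delta) = d$, the polygon is constrained in two independent directions simultaneously, so I would like to use these to define coordinates. The natural first attempt is to consider the linear map $\psi \colon \RR^2 \to \RR^2 \colon P \mapsto (\langle P, v\rangle, \langle P, w\rangle)$; by the definition of lattice width in each direction, after translating so that the minima are $0$, the image $\psi(\Delta)$ lands inside $[0,d] \times [0,d] = d\square$. The obstacle is that $\psi$ need not be a \emph{unimodular} transformation: its matrix has rows $v$ and $w$, whose determinant equals $\pm m$ where $m = |\det(v\,|\,w)|$ is the index of the sublattice generated by $v$ and $w$. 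So I must control this index.

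Thus the crux of the argument is to show that the index $m$ is exactly $1$, i.e. that $v$ and $w$ form a lattice basis of $\ZZ^2$; only then is $\psi$ (composed with a suitable translation) a genuine unimodular transformation and we may conclude $\ls_\square(\Delta) \le d$. The reverse inequality $\ls_\square(\Delta) \ge \lw(\Delta) = d$ is immediate since $\square \subset \RR \times [0,1]$ (up to equivalence), so lattice size with respect to the square can only be at least the lattice width. Hence the whole content reduces to proving that two linearly independent lattice width directions are automatically primitive and unimodular together. I would prove this by contradiction: if $m \ge 2$, then one can find a lattice direction $u$ lying strictly between $v$ and $w$ in the appropriate sense (for instance a suitable $\ZZ$-combination, or a primitive vector in the cone spanned by $v$ and $w$) for which $\lw_u(\Delta) < d$, contradicting minimality of $d = \lw(\Delta)$.

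To make the contradiction step precise I would exploit that lattice width in a direction is a piecewise-linear, in fact a support-type, function of the direction: writing $\lw_v(\Delta) = h_\Delta(v) + h_\Delta(-v)$ where $h_\Delta$ is the support function, the quantity $\lw_u(\Delta)$ is \emph{sublinear} (subadditive and positively homogeneous) in $u$. Consequently, for any lattice direction $u = av + bw$ with $a,b$ of appropriate signs, one gets a bound of the form $\lw_u(\Delta) \le |a|\,\lw_v(\Delta) + |b|\,\lw_w(\Delta)$; I would instead want a strict decrease, which should come from the fact that when $v,w$ are not a basis there is a primitive lattice vector $u$ that is a \emph{convex} combination (scaled down) of $v$ and $\pm w$, giving $\lw_u(\Delta)$ strictly below $d$. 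The hard part will be pinning down exactly which primitive $u$ to choose and verifying the strict inequality rather than a non-strict one; this is where the non-triviality of the index $m \ge 2$ must be converted into a concrete direction witnessing smaller width, and I expect this to be the main obstacle. Once such a $u$ is exhibited, it contradicts $\lw(\Delta) = d$, forcing $m = 1$ and completing the proof.
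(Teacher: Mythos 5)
Your reduction to the claim that $v$ and $w$ must themselves form a $\ZZ$-basis is a genuine gap, because that claim is false. Consider the quadrangle $\Delta=\conv\{(d/2,0),(0,d/2),(d/2,d),(d,d/2)\}$ for even $d$ (it appears later in this very paper, in the proof of Theorem \ref{thm_clas}): it has lattice width $d$, and both $(1,1)$ and $(1,-1)$ are lattice width directions, yet the matrix with rows $(1,1)$ and $(1,-1)$ has determinant $-2$, so these two directions span an index-$2$ sublattice of $\ZZ^2$. Since the hypothesis ``$m\geq 2$'' is thus actually satisfiable, no contradiction can ever be derived from it. Your own plan already contains the seed of this failure: you intend to exhibit a primitive direction $u$ with $\lw_u(\Delta)<d$, but no such $u$ can exist for \emph{any} polygon, because $d=\lw(\Delta)$ is by definition the minimum of $\lw_u(\Delta)$ over all lattice directions. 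The ``main obstacle'' you flag at the end is therefore not a technical difficulty to be overcome; it is insurmountable.

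The repair is to use your sublinearity estimate in the opposite direction: not to force a contradiction, but to manufacture a better second direction. If $v,w$ do not form a basis, choose a primitive vector $u\in\conv\{(0,0),v,w\}$ that does form a basis with $v$ (such a $u$ exists), and write $u=\lambda v+\mu w$ with $\lambda,\mu>0$ and $\lambda+\mu\leq 1$. Your estimate then gives
$$d\leq \lw_u(\Delta)\leq \lambda\,\lw_v(\Delta)+\mu\,\lw_w(\Delta)\leq(\lambda+\mu)\,d\leq d,$$
so $\lw_u(\Delta)=d$: the two-sided squeeze yields an equality, showing that $u$ is itself a lattice width direction. Now run your map $\psi$ with the pair $(v,u)$ in place of $(v,w)$; it is unimodular, and after a translation it places $\Delta$ inside $d\square$. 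This replacement argument is exactly what the paper does. Everything else in your write-up --- the definition of $\psi$, the observation that its determinant is the index of the sublattice, and the easy inequality $\ls_\square(\Delta)\geq\lw(\Delta)$ --- is correct and matches the intended structure.
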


\begin{proof}
If $v$ and $w$ do not form a $\ZZ$-basis of $\ZZ^2$, we take a primitive vector $u\in\conv\{(0,0),v,w\}$ such that $v$ and $u$ form a $\ZZ$-basis. Let $Q,Q'$ be lattice points of $\Delta$ such that $\langle Q',u\rangle-\langle Q,u\rangle=\lw_u(\Delta)$. Write $u=\lambda v+\mu w$ with $0<\lambda,\mu$ and $\lambda+\mu\leq 1$. Now $$d\leq \lw_u(\Delta)=\langle Q',(\lambda v+\mu w)\rangle-\langle Q,(\lambda v+\mu w)\rangle\leq\lambda \lw_v(\Delta)+\mu \lw_w(\Delta)\leq d,$$
so $\lw_u(\Delta)=d$. After applying a unimodular transformation, we may assume that $u=(0,1)$ and $v=(1,0)$, and that $\Delta$ fits into $d\square$, hence $ls_\square(\Delta)=d$.
\end{proof}

\begin{lemma} \label{newlemma}
Let $\Delta$ be a lattice polygon with $\lw(\Delta)=d>0$. Let $P$ be a vertex of $\Delta$ and $v\in\ZZ^2$ be a primitive vector. If $\lw_v(\Delta_P)<d$ and $\lw_v(\Delta_P)<\lw_v(\Delta)-1$, then $\Delta$ is equivalent to $\Upsilon_{d-1}:=\conv\{(0,0),(1,d),(d,1)\}$.
\end{lemma}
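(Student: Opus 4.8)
The plan is to normalise coordinates so that the hypotheses become a statement about horizontal lattice layers, read off the combinatorial consequences of the width drop, and then use the global bound $\lw(\Delta)=d$ to force the triangular shape. First I would put $\Delta$ in a convenient position. Since $\lw_v(\Delta_P)<\lw_v(\Delta)$, removing $P$ strictly decreases the width in direction $v$, so $P$ must be the unique lattice point of $\Delta$ attaining $\max_{\cdot}\langle\cdot,v\rangle$ or $\min_{\cdot}\langle\cdot,v\rangle$; replacing $v$ by $-v$ (which changes neither $\lw_v$ nor $\Delta_P$) we may assume $P$ is the unique minimiser. Applying a unimodular transformation I put $v=(0,1)$ and $P=(0,0)$.

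Write $M=\max\{y:(x,y)\in\Delta\}$ and $m'=\min\{y:(x,y)\in\Delta\cap\ZZ^2,\ (x,y)\neq P\}$. Because the maximal height $M$ is still attained after deleting the (bottom) vertex $P$, we have $\lw_v(\Delta)=M$ and $\lw_v(\Delta_P)=M-m'$. The hypothesis $\lw_v(\Delta_P)<\lw_v(\Delta)-1$ then says exactly that $m'\geq 2$, i.e.\ the layer $y=1$ is free of lattice points, while $\lw_v(\Delta_P)<d$ gives $M-m'\leq d-1$; together with $\lw(\Delta)=d\leq\lw_v(\Delta)=M$ this yields the basic inequalities $M\geq d$, $m'\geq 2$ and $M\leq m'+d-1$.

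Next I would analyse the two edges of $\Delta$ issuing from the apex $P$, with primitive direction vectors $u_1=(a_1,b_1)$ and $u_2=(a_2,b_2)$ where $b_1,b_2\geq 1$ and $\tfrac{a_1}{b_1}<\tfrac{a_2}{b_2}$. The lattice points on the edge from $P$ to $V_i=k_iu_i$ occur at heights that are multiples of $b_i$, so the emptiness of the layer $y=1$ forces $b_1,b_2\geq 2$. By convexity $\Delta\subseteq P+\operatorname{cone}(u_1,u_2)$, so the cross-section of $\Delta$ at height $y$ is contained in $[\tfrac{a_1}{b_1}y,\tfrac{a_2}{b_2}y]$; since $1<k_ib_i$, at $y=1$ this cross-section equals $[\tfrac{a_1}{b_1},\tfrac{a_2}{b_2}]$ and contains no integer, whence $\tfrac{a_2}{b_2}-\tfrac{a_1}{b_1}=\tfrac{a_2b_1-a_1b_2}{b_1b_2}<1$. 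After a shear $(x,y)\mapsto(x+ky,y)$ (which fixes $v$ and leaves the difference $\tfrac{a_2}{b_2}-\tfrac{a_1}{b_1}$ unchanged) I may further assume $0<a_1<b_1$, so that $P$ is the unique leftmost point of $\Delta$ and $\lw_{(1,0)}(\Delta)$ is the largest $x$-coordinate occurring in $\Delta$.

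The heart of the argument, and the step I expect to be the main obstacle, is to upgrade the lower bounds $\lw_u(\Delta)\geq d$ into exact equalities pinning down the shape. The difficulty is that the inequalities $M\geq d$ and $\lw_{(1,0)}(\Delta)\geq d$ only make $\Delta$ \emph{large}, whereas the narrowness of the neck (the length $<1$ of the $y=1$ slice, propagated by the homothety from $P$) makes it \emph{thin}; the two must be played against each other. Concretely I would test the width in the directions normal to $u_1,u_2$ and in the diagonal directions such as $(1,-1)$, and show, via a short case distinction according to whether the rightmost vertex lies on the ray $Pu_2$ or strictly inside the cone, that any configuration with $M>d+1$, with $m'>2$, or with a fourth vertex must be strictly wider than $d$ in some direction. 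This forces $M=d+1$, $m'=2$, $k_1=k_2=1$ and $\{a_1,a_2\}=\{1,d\}$ (up to the reflection $x\mapsto -x$), identifying $\Delta$ with $\conv\{(0,0),(1,d+1),(d,d+1)\}$. Finally I would apply the unimodular shear $(x,y)\mapsto(x,y-x)$, with matrix $\begin{pmatrix}1&0\\-1&1\end{pmatrix}$, which sends this triangle to $\conv\{(0,0),(1,d),(d,1)\}=\Upsilon_{d-1}$, completing the proof.
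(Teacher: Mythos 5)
Your reductions up to the shear are sound and essentially reproduce the first half of the paper's own proof: after arranging that $P=(0,0)$ is the unique minimiser and $v=(0,1)$, the hypothesis $\lw_v(\Delta_P)<\lw_v(\Delta)-1$ says exactly that the layers $y=1,\dots,m'-1$ contain no lattice points of $\Delta$, and $\Delta$ is trapped in a cone at $P$ whose height-one slice has length less than $1$; the concluding shear $(x,y)\mapsto(x,y-x)$ is also the paper's. The genuine gap is the step you yourself call the heart. First, the contradiction you propose to reach is not a contradiction: $\lw(\Delta)=d$ means $\lw_u(\Delta)\geq d$ for \emph{every} direction $u$, so showing that a configuration is ``strictly wider than $d$ in some direction'' rules out nothing --- $\Upsilon_{d-1}$ itself is wider than $d$ in all but finitely many directions. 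The usable violation is a direction in which $\Delta$ is \emph{narrower} than $d$, and that is indeed how one kills the subcase $m'=2$, $M<d+1$ (there $\Delta\subseteq\conv\{(0,0),(1,M),(M-1,M)\}$ has horizontal width $M-1<d$). But when $m'>2$ no such one-shot width violation is available: everything the narrow-cone geometry gives (after further subdividing the cone, which your normalisation has not yet done) is that $\Delta_P$ is horizontally thin, $\lw_{(1,0)}(\Delta_P)<d$, while $\Delta$ is horizontally long, $\lw_{(1,0)}(\Delta)\geq d$. That is not a contradiction; it is a reappearance of the hypotheses of the lemma for the new direction $(1,0)$.

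The idea you are missing --- and it is the key mechanism of the paper's proof --- is to choose $v$ optimally at the outset: among all primitive directions satisfying the hypotheses (with the gap on the minimum side), pick one minimising the gap $\min_{Q\in\Delta_P}\langle v,Q\rangle-\langle v,P\rangle$. If that minimal gap $m'$ exceeds $2$, one subdivides the cone until it contains no lattice points of height $1,\dots,m'-1$ at all (controlling only the height-one slice is not enough for this), and then checks that the direction $(1,0)$ satisfies $\lw_{(1,0)}(\Delta_P)<d$ with gap $x_m$ obeying $1<x_m<m'$: a strictly better valid direction, contradicting the minimal choice of $v$. This descent is what forces $m'=2$; afterwards $M=d+1$ and $\Delta=\conv\{(0,0),(1,d+1),(d,d+1)\}$ follow as you expect (your ``fourth vertex'' worry is handled by noting that if $\Delta$ missed a top corner $Q$ of that triangle, it would lie in $(\conv\{(0,0),(1,d+1),(d,d+1)\})_Q$, which has lattice width $<d$). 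Without the minimality of $v$, or an equivalent induction on the gap, the ``short case distinction'' you postulate cannot be completed, because configurations with $m'>2$ are not excluded by testing widths of $\Delta$ in any fixed finite set of directions.
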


\begin{proof}
Since $\lw_v(\Delta_P)<\lw_v(\Delta)-1$, we have that either 
$$\min_{Q\in\Delta_P}\langle v,Q\rangle > \langle v,P\rangle+1 \quad \text{or} \quad
\max_{Q\in\Delta_P}\langle v,Q\rangle < \langle v,P\rangle-1.$$
By replacing $v$ by $-v$, we may assume that we are in the first case. Moreover, we may choose $v$ such that the difference $\min_{Q\in\Delta_P}\langle v,Q\rangle-\langle v,P\rangle$ is minimal but greater than 1, and such that $\lw_v(\Delta_P)<d$.

We apply a unimodular transformation so that $P=(0,0)$ and $v=(0,1)$. Let $y_m$ (resp. $y_M$) be the smallest (resp. greatest) $y$-coordinate occurring in $\Delta_P$. Note that $y_m=\min_{Q\in\Delta_P}\langle v,Q\rangle$ and $y_M=\max_{Q\in\Delta_P}\langle v,Q\rangle$, hence $y_m>1$ and $y_M-y_m<d$. 

Define the cone $C_k:=\{\lambda (k,1)+\mu (k+1,1)|\lambda,\mu\geq 0\}$. Since 
$$\Delta \subset (\RR\times\RR_{>0})\cup\{P\} = \cup_{k\in\ZZ}\, C_k$$ 
and $y_m>1$, the polygon $\Delta$ is contained in a cone $C_k$ for some $k\in\ZZ$. Using the unimodular transformation $(x,y)\mapsto(x-ky,y)$, we may assume that $k=0$, i.e. $\Delta\subseteq C_0=\{\lambda (0,1)+\mu (1,1)|\lambda,\mu\geq 0\}$. In fact, we then have
 that $$\Delta\subseteq\conv\{(0,0),(1,y_M),(y_M-1,y_M)\}.$$

If $y_m=2$, we have $y_M=(y_M-y_m)+2\leq d+1$. The strict inequality $y_M<d+1$ is impossible as the horizontal width $\lw_{(1,0)}(\Delta)$ would be less than $d$. So we have that $y_M=d+1$ and $\Delta\subseteq \Delta'=\conv\{(0,0),(1,d+1),(d,d+1)\}$. Since $\lw((\Delta')_Q)<d$ for $Q\in\{(1,d+1),(d,d+1)\}$, we must have $\Delta=\Delta'$. This is equivalent to $\Upsilon_{d-1}$ via $(x,y)\mapsto(x,y-x)$.

From now on, assume that $y_m>2$. Then $(1,2)\notin\Delta$ which means that either
$$\Delta\subseteq\{\lambda(0,1)+\mu(1,2)|\lambda,\mu\geq 0\}\quad \text{or} \quad \underline{\Delta\subseteq\{\lambda(1,2)+\mu(1,1)|\lambda,\mu\geq 0\}}.$$
We can reduce to the latter case using the transformation $(x,y)\mapsto(y-x,y)$. In fact, we can keep subdividing this cone until we find a cone $C$ containing $\Delta$ that does not contain any lattice point with $y$-coordinate in $\{1,\ldots,y_m-1\}$. Let $\ell\in\ZZ$ be such that $C$ passes in between $(\ell-1,y_m-1)$ and $(\ell,y_m-1)$. 
Then $$\Delta_P\subseteq\conv\{(\ell,y_m-1),(\ell,y_M),(\ell+y_M-y_m+1,y_M)\}.$$ If $x_m$ (resp. $x_M$) is the smallest (resp. greatest) $x$-coordinate occurring in a lattice point of $\Delta_P$, then $2\leq \ell\leq x_m<y_m$ and $x_M\leq \ell+y_M-y_m$, so $x_M-x_m\leq y_M-y_m<d$. But this means that $\lw_{(1,0)}(\Delta_P)<d$ and 
$$1<\min_{Q\in\Delta_P}\langle(1,0),Q\rangle < y_m = \min_{Q\in\Delta_P}\langle v,Q\rangle$$
contradicting the minimality of $v$.
\end{proof}

\begin{lemma} \label{lemma2}
If $\Delta\subset \RR^2$ is a non-empty minimal lattice polygon with $\lw(\Delta)=d>0$, then $\ls_\square(\Delta)=d$. 
\end{lemma}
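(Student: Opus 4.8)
The plan is to combine the trivial inequality $\ls_\square(\Delta)\ge\lw(\Delta)=d$ (valid for every lattice polygon, since fitting into $m\square$ forces the lattice width to be at most $m$) with a proof of the reverse inequality $\ls_\square(\Delta)\le d$. By Lemma \ref{lemma1}, the reverse inequality is immediate as soon as $\Delta$ admits two linearly independent lattice width directions, so the entire content is to handle the case where all lattice width directions of $\Delta$ are parallel. I would therefore fix a lattice width direction $v$, normalize by a unimodular transformation so that $v=(0,1)$ and $\Delta\subseteq\RR\times[0,d]$, and record that in this remaining case $\lw_w(\Delta)\ge d+1$ for every primitive $w$ not parallel to $v$ (the value is an integer, is $\ge d$ because $v$ is a width direction, and cannot equal $d$ without producing a second independent width direction).

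Next I would feed minimality into Lemma \ref{newlemma}. For an arbitrary vertex $P$, minimality gives $\lw(\Delta_P)<d$; pick a width direction $w$ of $\Delta_P$, so that $\lw_w(\Delta_P)\le d-1$. If $w\not\parallel v$, then $\lw_w(\Delta)\ge d+1$ forces $\lw_w(\Delta_P)\le\lw_w(\Delta)-2<\lw_w(\Delta)-1$, and Lemma \ref{newlemma} yields $\Delta\cong\Upsilon_{d-1}$. If instead $w\parallel v$ and $\lw_v(\Delta_P)\le d-2$, then again $\lw_v(\Delta_P)<\lw_v(\Delta)-1$ and Lemma \ref{newlemma} applies. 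In either situation we are finished, because $\Upsilon_{d-1}=\conv\{(0,0),(1,d),(d,1)\}\subseteq d\square$ gives $\ls_\square(\Upsilon_{d-1})\le d$ directly. Consequently, if we are \emph{not} already done, then every vertex $P$ must satisfy $\lw_v(\Delta_P)=d-1$ exactly, and this is the only scenario left to exclude.

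Finally I would derive a contradiction from this last scenario by a direct lattice-point count in the strip $\RR\times[0,d]$. Writing $m_0,m_d$ for the numbers of lattice points of $\Delta$ on the lines $y=0$ and $y=d$, I would first note that $m_0\ge2$ and $m_d\ge2$ cannot both hold, since then deleting any single vertex would leave lattice points on both lines and keep the $v$-width equal to $d$. After the reflection $y\mapsto d-y$ if necessary, assume $m_0=1$, with unique (hence vertex) point $A$ on $y=0$. For any vertex $P\neq A$ the point $A$ survives in $\Delta_P$, so the requirement $\lw_v(\Delta_P)=d-1$ forces the top line $y=d$ to be emptied by deleting $P$; that is, $P$ is the unique lattice point on $y=d$. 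Since this cannot hold for two distinct vertices, $\Delta$ would have at most two vertices, contradicting two-dimensionality.

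The main obstacle I expect is the bookkeeping in this last step: one must verify carefully that removing a vertex $P$ really deletes exactly $\{P\}$ from $\Delta\cap\ZZ^2$, so that $\lw_v(\Delta_P)$ is controlled purely by which extreme lattice points disappear, and that the reduction to $m_0=1$ via the reflection $y\mapsto d-y$ is harmless. Everything else is a clean dichotomy funneling each case into Lemma \ref{lemma1} or Lemma \ref{newlemma}.
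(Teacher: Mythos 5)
Your proof is correct, and it rests on the same two pillars as the paper's: Lemma \ref{lemma1} to reduce everything to exhibiting two linearly independent lattice width directions, and Lemma \ref{newlemma} to dispose of any vertex $P$ admitting a direction $w$ with $\lw_w(\Delta_P)<d$ but $\lw_w(\Delta)>d$. The difference lies in how the awkward case of a shrinking direction parallel to $v$ is handled. The paper makes this case vanish at the outset by a choice you did not exploit: fix lattice points $Q,Q'\in\Delta$ with $\langle Q,v\rangle-\langle Q',v\rangle=d$ and apply minimality to a vertex $P\notin\{Q,Q'\}$ (such a vertex exists, since a two-dimensional polygon has at least three vertices). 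Because $Q,Q'\in\Delta_P$, one gets $\lw_v(\Delta_P)\geq d$ for free, so any direction $w$ with $\lw_w(\Delta_P)<d$ is automatically independent of $v$, and the dichotomy $\lw_w(\Delta)=d$ (Lemma \ref{lemma1}) versus $\lw_w(\Delta)>d$ (Lemma \ref{newlemma}) ends the proof immediately. Your route instead takes $P$ arbitrary, which leaves the residual scenario in which every vertex satisfies $\lw_v(\Delta_P)=d-1$; you then kill it with the counting argument in the strip $\RR\times[0,d]$. That argument is sound: since a vertex is an extreme point, $\Delta_P\cap\ZZ^2=(\Delta\cap\ZZ^2)\setminus\{P\}$, so if both boundary lines $y=0$ and $y=d$ carried two or more lattice points no single deletion could lower the vertical width, and once (after the harmless reflection $y\mapsto d-y$) the line $y=0$ carries a unique point $A$, every vertex other than $A$ would have to be the unique lattice point on $y=d$, contradicting the existence of at least two such vertices. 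So both arguments work; the paper's choice of $P$ away from $Q,Q'$ is precisely the trick that renders your third step unnecessary, while your version shows the lemma can be pushed through even without that preliminary normalization, at the cost of one extra (but elementary) counting paragraph.
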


\begin{proof}
By Lemma \ref{lemma1}, we only have to show that there are two linearly independent lattice width directions. Suppose that $v$ is a lattice width direction and that $Q,Q'\in\Delta\cap\ZZ^2$ such that $\langle Q,v\rangle-\langle Q',v\rangle=d$. Now let $P$ be a vertex of $\Delta$ different from $Q,Q'$. By minimality of $\Delta$, we have that $\lw(\Delta_P)<d$. That means there exists a direction $w$ such that $\lw_w(\Delta_P)<d$. Because $Q$ and $Q'$ are still in $\Delta_P$, $w$ cannot be $v$ or $-v$, so $w$ must be linearly indepentent of $v$. If $\lw_w(\Delta)=d$, we are done. If $\lw_w(\Delta)>d$, then by Lemma \ref{newlemma}, $\Delta$ is equivalent to $\Upsilon_{d-1}\subseteq d\square$.
\end{proof}

\begin{theorem} \label{thm_clas}
Let $\Delta\subset \RR^2$ be a non-empty minimal lattice polygon with $\lw(\Delta)=d$. Then $\Delta$ is equivalent to a minimal polygon of one of the following forms:
\begin{enumerate}
\item[(T1)] $\conv\{(0,0),(d,y),(x,d)\}$ where $x,y\in\{0,\ldots,d\}$ satisfy $x+y\leq d$;
\item[(T2)] $\conv\{(x_1,0),(d,y_2),(x_2,d),(0,y_1)\}$ where $x_1,x_2,y_1,y_2\in\{1,\ldots,d-1\}$ satisfy $\max(x_2,y_2)\geq\min(x_1,y_1)$ and $\max(d-x_2,y_1)\geq\min(d-x_1,y_2)$;
\item[(T3)] $\conv\{(0,0),(\ell,0),(d,y+d-\ell),(x+\ell,d),(z,z+d-\ell)\}$ with $\ell\in\{2,\ldots,d-2\}$, $x\in\{1,\ldots,d-\ell-1\}$, $y,z\in\{1,\ldots,\ell-1\}$;
\item[(T4)] $\conv\{(0,0),(z'+\ell,z'),(d,y+d-\ell),(x+\ell,d),(z,z+d-\ell)\}$ with $\ell\in\{2,\ldots,d-2\}$, $y,z\in \{1,\ldots,\ell-1\}$, $x,z'\in\{1,\ldots,d-\ell-1\}$;
\item[(T5)] $\conv\{(x_1,0),(z_2+\ell,z_2),(d,d-\ell+y_2),(x_2+\ell,d),(z_1,z_1+d-\ell),(0,y_1)\}$ with $\ell\in\{2,\ldots,d-2\}$, $x_1,y_2,z_1\in \{1,\ldots,\ell-1\}$, $x_2,y_1,z_2\in\{1,\ldots,d-\ell-1\}$;
\end{enumerate}
\end{theorem}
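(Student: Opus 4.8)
The plan is to put $\Delta$ into a normal form inside a dilated square and then read off the admissible boundary shapes. By Lemma \ref{lemma2} I may assume, after a unimodular transformation, that $\Delta\subseteq d\square$ with $\lw_{(1,0)}(\Delta)=\lw_{(0,1)}(\Delta)=d$; since $\Delta\subseteq d\square$ forces both widths to be at most $d$, the polygon $\Delta$ meets each of the four sides of $d\square$. If $\Delta$ is equivalent to $\Upsilon_{d-1}$ it is of type (T1) (take $x=y=1$), so I assume $\Delta\not\cong\Upsilon_{d-1}$ from now on. Then the contrapositive of Lemma \ref{newlemma} shows that for every vertex $P$ and every primitive $v$ with $\lw_v(\Delta_P)<d$ one has $\lw_v(\Delta)\le \lw_v(\Delta_P)+1\le d$; as $v$ is integral these widths are integers and $\lw_v(\Delta)\ge d$, whence $\lw_v(\Delta)=d$ and $\lw_v(\Delta_P)=d-1$. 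Combined with minimality this yields the key fact I use throughout: each vertex $P$ is the unique lattice point of $\Delta$ on one of the two extreme supporting lines of some lattice width direction $v_P$, and deleting $P$ lowers $\lw_{v_P}$ by exactly one.

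Next I would pin down the relevant width directions. Besides $(1,0)$ and $(0,1)$ the natural candidates are the diagonals $(1,1)$ and $(1,-1)$, and the dihedral symmetries of $d\square$ (which are unimodular) let me normalize any further direction. The core of the argument is an inequality analysis of the extreme vertices: writing the side contacts as $(0,l),(d,r),(b,0),(t,d)$ and comparing the values of $x+y$ and $x-y$ there with the prescribed widths shows that a diagonal direction may be a width direction, but that at most one of the two can have an extreme line actually supporting an edge that cuts into the interior of $d\square$. Indeed, forcing both diagonals to cut deep drives $r=t=b=l=d/2$ and makes the would-be new diagonal vertices collapse onto the side midpoints, so no extra vertex is created; this is the mechanism that caps the vertex count at six. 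Accordingly the analysis splits into two cases.

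In the first case every extreme diagonal line meets $\Delta$ only at vertices already on $\partial(d\square)$, so each vertex of $\Delta$ is the unique lattice point of $\Delta$ on one of the four sides of $d\square$; enumerating by how many of the four corners are themselves vertices yields a triangle (T1) or a quadrilateral (T2), and evaluating $x\pm y$ at the vertices turns the requirement that no direction has width below $d$ into exactly the condition $x+y\le d$ of (T1) and the two $\max\ge\min$ inequalities of (T2). In the second case I may assume $(1,-1)$ is the diagonal whose extreme lines $x-y=\ell$ and $x-y=\ell-d$ support edges; then $\Delta$ sits inside the hexagon $H=d\square\cap\{\,\ell-d\le x-y\le \ell\,\}$ and meets all six of its sides. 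Repeating the corner enumeration for the six corners of $H$, and using the reflection of $H$ across $x+y=d$ to fix which corner is preserved, produces the pentagons (T3) and (T4) and the hexagon (T5); the range $\ell\in\{2,\dots,d-2\}$ and the remaining parameter bounds fall out of non-degeneracy of the cut together with forcing every leftover direction to have width at least $d$.

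The step I expect to be the main obstacle is the direction control of the second paragraph: proving cleanly that beyond $(1,0)$ and $(0,1)$ only the diagonals matter and that at most one of them cuts deep, so that $\Delta$ has at most six vertices. The subtlety is that genuinely ``steep'' width directions do occur — for instance a suitable parallelogram has $(2,-1)$ as a lattice width direction — and one must show that after the normalizing shear such a direction becomes a diagonal and $\Delta$ lands in one of the listed classes. Carrying this out uniformly, together with the bookkeeping over the six corners of $H$ and the final verification that each listed family with the stated ranges is genuinely minimal (and that the dihedral symmetry of $d\square$ is used so that equivalent polygons are not listed twice), is where the bulk of the work lies.
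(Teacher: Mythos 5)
Your overall framework matches the paper's: normalize $\Delta\subseteq d\square$ via Lemma \ref{lemma2}, set aside $\Upsilon_{d-1}$ as type (T1), and use the contrapositive of Lemma \ref{newlemma} to conclude that any primitive $v$ with $\lw_v(\Delta_P)<d$ satisfies $\lw_v(\Delta)=d$ and $\lw_v(\Delta_P)=d-1$; the subsequent split (all vertex contacts on sides of $d\square$, giving (T1)/(T2), versus one active diagonal, giving the hexagon $H_\ell$ and types (T3)--(T5)) is also the paper's. But there is a genuine gap exactly where you yourself flag ``the main obstacle'': you never prove that for each vertex $P$ the direction $v_P$ can be taken in $\{(1,0),(0,1),(1,1),(1,-1)\}$, and the entire corner enumeration collapses without this fact. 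The paper closes it with a short convexity argument recycled from the proof of Lemma \ref{lemma1}: if $v=(v_x,v_y)$ with $\{v,-v\}$ disjoint from that set satisfies $\lw_v(\Delta_P)<d$, a symmetry of $d\square$ (these are unimodular) lets one assume $0<v_x<v_y$, so that $(1,1)=\lambda(1,0)+\mu v$ with $\lambda,\mu>0$ and $\lambda+\mu\leq 1$; then
$$\lw_{(1,1)}(\Delta_P)\leq \lambda\,\lw_{(1,0)}(\Delta_P)+\mu\,\lw_v(\Delta_P)<(\lambda+\mu)\,d\leq d,$$
so one of the four privileged directions already works for $P$. This is not a long argument, but it is the hinge of the whole proof, and your proposal leaves it as an acknowledged difficulty (your worry about ``steep'' directions like $(2,-1)$) rather than resolving it.

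A second, smaller gap: in the case where both diagonals are active, you assert that an ``inequality analysis'' forces the contacts to the side midpoints, so that $\Delta$ is the diamond $\conv\{(d/2,0),(0,d/2),(d/2,d),(d,d/2)\}$. The paper does not prove this by hand: having exhibited four distinct lattice width directions $(1,0),(0,1),(1,1),(1,-1)$, it invokes \cite[Lemma 5.2(v)]{CaCo1} or \cite{DMN} to identify $\Delta$ with the diamond (type (T2)). Your sketch is plausible but not carried out, and it is not obviously routine. On the other hand, the ``final verification that each listed family with the stated ranges is genuinely minimal,'' which you list among the remaining work, is not actually required: the theorem only asserts that $\Delta$ is equivalent to a \emph{minimal} polygon of one of the listed shapes, and minimality of the image is automatic because $\Delta$ is minimal and unimodular equivalence preserves lattice width and hence minimality.
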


\begin{remark} \label{remarkH}
See Figure \ref{figure types} for a picture of the five types.
The minimal polygons appearing in the types (T3), (T4) and (T5) are inscribed in the hexagon 
$$H_\ell:=\conv\{(0,0),(\ell,0),(d,d-\ell),(d,d),(\ell,d),(0,d-\ell)\}.$$ 
This is also the case for the triangles of type (T1) with $(x,y)\in\{(d,0),(0,d)\}$ (where we allow $\ell\in\{0,d\}$) 
and for the quadrangles of type $(T2)$ with $\max(d-x_2,y_1)=\min(d-x_1,y_2)$. 

\begin{figure}[h!]
\centering
\includegraphics[height=4cm]{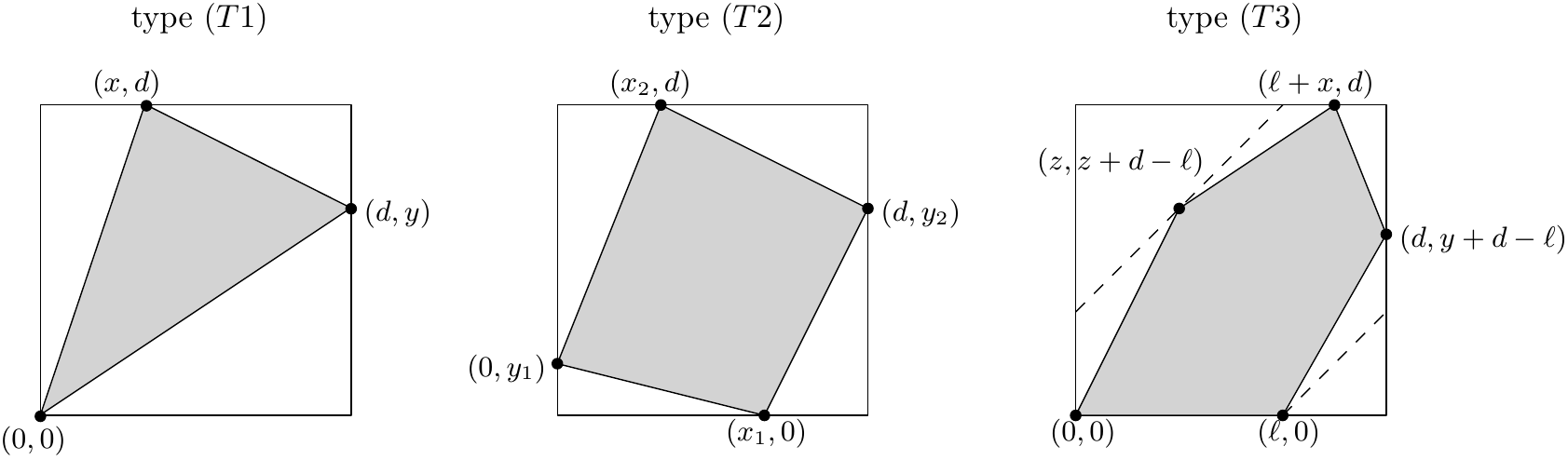}
\includegraphics[height=4cm]{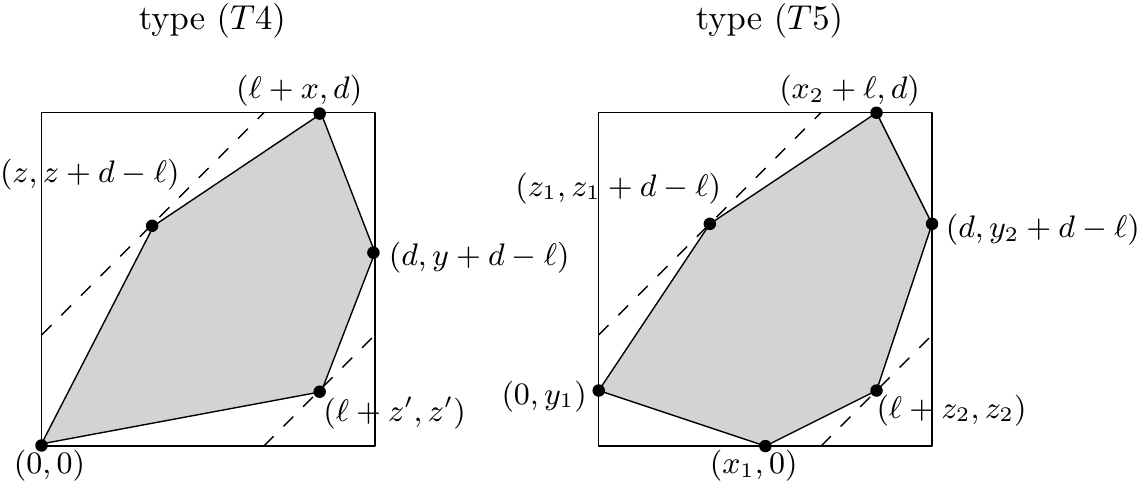}
\caption{The five types in the classification}
\label{figure types}
\end{figure}
\end{remark}

\begin{proof}
If $d=0$, then $\Delta$ consists of a single point and it is of shape $(T1)$. So assume $d\geq 1$. Because of Lemma \ref{lemma2}, we may assume that $\Delta\subset d\square=[0,d]\times [0,d]$. Moreover, we may assume that $\Delta\not\cong\Upsilon_{d-1}$ since $\Upsilon_{d-1}$ is of type $(T1)$. Let $P$ be any vertex of $\Delta$. By Lemma \ref{newlemma}, if $\lw_v(\Delta_P)<d$ for some primitive vector $v\in\ZZ^2$, then $\lw_v(\Delta_P)=d-1$ and $\lw_v(\Delta)=d$, hence $v$ is a lattice width direction. 

By minimality, we know that there always exists a lattice direction $v$ satisfying $\lw_v(\Delta_P)<d$. We claim that we can always take $v\in\{(0,1),(1,0),(1,1),(1,-1)\}$. Indeed, suppose that $v=(v_x,v_y)\in\ZZ^2$ satisfies $$\{v,-v\}\cap\{(0,1),(1,0),(1,1),(1,-1)\}=\emptyset \quad \text{and} \quad \lw_v(\Delta_P)<d.$$ After a unimodular transformation, we may assume that $0<v_x<v_y$, hence $(1,1)\in\conv\{(0,0),(1,0),v\}$. Using a similar trick as in Lemma \ref{lemma1}, we get that $\lw_{(1,1)}(\Delta_P)<d$, which proves the claim.   

Let $\mathcal{V}$ be set consisting of vectors $v\in \{(1,1),(1,-1)\}$ for which there exists a vertex $P$ of $\Delta$ with $\lw_v(\Delta_P)<d$. If $\mathcal{V}=\{(1,1),(1,-1)\}$, then $\Delta$ has $4$ different lattice width directions, namely $(1,0),(0,1),(1,1)$ and $(1,-1)$. By \cite[Lemma 5.2(v)]{CaCo1} or \cite{DMN}, this means that $\Delta\cong \conv\{(d/2,0),(0,d/2),(d/2,d),(d,d/2)\}$ for some even $d$, hence it is of type $(T2)$. 
If $\mathcal{V}=\emptyset$, we claim that $\Delta$ is of type $(T1)$ or $(T2)$. 
Indeed, for every vertex $P$ of $\Delta$, we have that either $\lw_{(1,0)}(\Delta_P)$ or $\lw_{(0,1)}(\Delta_P)$ is smaller than $d$. In particular, this means that there has to be a side of $d\square$ with $P$ as its only point in $\Delta$. One then easily checks the claim:
if $\Delta$ is a triangle, then it will be of type $(T1)$; if it is a quadrangle, then it is of type $(T2)$. 

From now on, suppose that $\mathcal{V}$ is not equal to $\emptyset$ or $\{(1,1),(1,-1)\}$, hence $\mathcal{V}=\{(1,1)\}$ or $\mathcal{V}=\{(1,-1)\}$.  We can suppose that $\mathcal{V}=\{(1,-1)\}$ by using the transformation $(x,y)\mapsto(x,-y)$ if necessary. 
Hence, for each vertex $P$ of $\Delta$, there is a vector $v\in\{(1,0),(0,1),(1,-1)\}$ with $\lw_v(\Delta_P)<d$. 
Since $\lw_{(1,-1)}(\Delta)=d$, there exists an integer $\ell\in\{0,\ldots,d\}$ such that $\langle Q,(1,-1)\rangle\in[\ell-d,\ell]$ for all $Q\in\Delta$. If $\ell\in\{0,d\}$, then $\Delta$ is a triangle whose vertices are vertices of $d\square$, so it is of the form $(T1)$. Now assume that $\ell\in\{1,\ldots,d-1\}$, hence $\Delta$ is contained in the hexagon $H_\ell$ from Remark \ref{remarkH}. 
Each side of $H_\ell$ contains at least one lattice point of $\Delta$, and if it contains more than one point, it is also an edge of $\Delta$. Otherwise, there would be a vertex $P$ lying on exactly one side of $H_\ell$, while not being the only point of $\Delta$ on that side of $H_\ell$. But then there is no $v\in\{(0,1),(1,0),(1,-1)\}$ with $\lw_v(\Delta_P)<d$ (as every side of $H_\ell$ contains a point of $\Delta_P$), a contradiction.

Denote by $\mathcal{S}$ the set of sides that $\Delta$ and $H_\ell$ have in common. 
Then $\mathcal{S}$ can not contain two adjacent sides $S_1,S_2$: otherwise for the vertex $P=S_1\cap S_2$, each side of $H_\ell$ would have a non-empty intersection with $\Delta_P$, contradicting the fact that there is a $v\in\{(0,1),(1,0),(1,-1)\}$ with $\lw_w(\Delta_P)<d$.

Assume that $\sharp \mathcal{S}\geq 2$ and take $S_1=[Q_1,Q_2]\in\mathcal{S}$. Its adjacent sides of $H_\ell$ contain no points of $\Delta$ except from $Q_1$ and $Q_2$. This implies that $\mathcal{S}=\{S_1,S_2\}$ where $S_1,S_2$ are opposite edges of $H_\ell$, and that $\Delta$ is the convex hull of these two edges. Hence $\Delta$ is equivalent to the quadrangle $\conv\{(\ell,0),(d,d-\ell),(\ell,d),(0,d-\ell)\}\subset H_\ell$, which is of type $(T2)$.  

If $\mathcal{S}$ consists of a single side $S$, we may assume that $S=[Q_1,Q_2]$ is the bottom edge of $H_\ell$. Let $P_1$ (resp. $P_2$) be the vertex of $\Delta$ on the upper left diagonal side (resp. the right vertical edge) of $H_\ell$. If $P_1$ is also on the top edge of $H_\ell$ (i.e. $P_1=(\ell,d)$), then $\Delta$ has only four vertices, namely $Q_1,Q_2,P_1,P_2$. Applying the transformation $(x,y)\mapsto (x,-x+y+\ell)$, we end up with a quadrangle of type $(T2)$. By a similar reasoning, if $P_2$ is on the top edge of $H_\ell$ (i.e. $P_2=(d,d)$), we end up with type $(T2)$. If neither $P_1$ nor $P_2$ are on the top edge of $H_\ell$, then there is a fifth vertex $P_3$ on that top edge, and we are in case $(T3)$.

The only remaining case is when $\mathcal{S}=\emptyset$, hence each edge of $H_\ell$ contains only one point of $\Delta$. If $H_\ell$ and $\Delta$ have no common vertex, then $\Delta$ is of type $(T5)$. If they share one vertex, we can reduce to type $(T4)$ using a transformation if necessary. Note that two common vertices of $H_\ell$ and $\Delta$ can never be connected by an edge of $H_\ell$ as that edge would be in $\mathcal{S}$, so there are at most three common vertices. If there are three shared vertices, then $\Delta$ is a triangle of type $(T1)$, again using a transformation if necessary. So assume $H_\ell$ and $\Delta$ share two vertices. Together these two points occupy four edges of $H_\ell$ and each of the other two edges of $H_\ell$ (call them $A$ and $B$) contains exactly one vertex of $\Delta$. Take two pairs of opposite sides of $H_\ell$ (so four sides in total) that together contain $A$ and $B$, then they contain all vertices of $\Delta$: since any common vertex of $H_\ell$ and $\Delta$ lies on two sides of $H_\ell$, they cannot lie both on the sides we didn't choose, as they are parallel. We can find a unimodular transformation mapping these sides into the four sides of $d\square$, hence $\Delta$ is of type $(T2)$. 
\end{proof}

\begin{remark}
From the classification in Theorem \ref{thm_clas}, one can easily deduce the following result from \cite{FTM}: $\vol(\Delta)\geq \frac{3}{8}\lw(\Delta)^2$ for each lattice polygon $\Delta\subset \RR^2$, and equality holds for minimal polygons of type $(T1)$ with $d$ even and $x=y=\frac{d}{2}$. For odd $d$, this inequality can be sharpened to $\vol(\Delta)\geq \frac{3}{8}\lw(\Delta)^2+\frac{1}{8}$, and equality holds for minimal polygons of type $(T1)$ with $x=\frac{d-1}{2}$ and $y=\frac{d+1}{2}$.
\end{remark}

\begin{corollary}
If $\Delta\subset\mathbb{R}^2$ is a non-empty minimal lattice polygon with $\lw(\Delta)=d>1$, then $$\sharp(\Delta\cap\ZZ^2)\leq \max((d-1)^2+4,(d+1)(d+2)/2).$$ Moreover, this bound is sharp.
\end{corollary}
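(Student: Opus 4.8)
The plan is to invoke Theorem~\ref{thm_clas} to reduce to the five explicit families, and then to maximize $\sharp(\Delta\cap\ZZ^2)$ over the parameters within each family; since the number of lattice points is a unimodular invariant, this suffices. The basic tool throughout is Pick's formula $\sharp(\Delta\cap\ZZ^2)=\vol(\Delta)+\tfrac12 B(\Delta)+1$, where $B(\Delta)$ denotes the number of boundary lattice points, combined with the facts that $\vol(\Delta)$ is read off from the vertices by the shoelace formula and that an edge with integer displacement $(p,q)$ between its endpoints carries $\gcd(p,q)+1$ lattice points. I expect the two extremal families to be the corner triangle of type (T1) and the ``rotated square'' of type (T2), realizing the two terms of the bound.

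I would first dispose of types (T1) and (T2), where the optimization is short. For $\conv\{(0,0),(d,y),(x,d)\}$ one has $\vol=\tfrac12(d^2-xy)$, which is maximal exactly when $xy=0$, and $B=\gcd(d,y)+\gcd(x,d)+\gcd(d-x,d-y)\le 3d$; since $B-xy\le 3d$ with equality only at the corner triangles, Pick's formula gives the maximal value $(d+1)(d+2)/2$. For (T2) a direct shoelace computation yields $2\vol=d^2+(x_1-x_2)(y_2-y_1)$ and $B=\sum_{\text{edges}}\gcd(\cdot,\cdot)$, so that $\sharp(\Delta\cap\ZZ^2)=\tfrac{d^2}{2}+1+\tfrac12(P+G)$ with $P=(x_1-x_2)(y_2-y_1)\le(d-2)^2$ and $G$ the sum of the four edge-gcds. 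The point is that $P=(d-2)^2$ forces the four side-points to lie at lattice distance $1$ from the corners of $d\square$, whence $G=4$; one then checks that lowering $P$ never lets $G$ grow fast enough to compensate, so $\max(P+G)=(d-2)^2+4$ is attained at the square $\conv\{(1,0),(d,1),(d-1,d),(0,d-1)\}$, giving $(d-1)^2+4$.

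For the hexagonal types (T3), (T4) and (T5) I would write $\sharp(\Delta\cap\ZZ^2)=\sharp(H_\ell\cap\ZZ^2)-\sum_c R_c$, where $R_c$ counts the lattice points lost when the inscribed polygon cuts the corner $c$ of $H_\ell$ and
$$\sharp(H_\ell\cap\ZZ^2)=(d+1)^2-\tfrac12(d-\ell)(d-\ell+1)-\tfrac12\,\ell(\ell+1).$$
Each cut corner costs at least one point, but—exactly as in (T2)—the single vertex forced onto each edge of $H_\ell$ cannot lie close to both of its neighbouring corners, so the $R_c$ cannot all be minimal at once. Quantifying this tension, again through the shoelace/gcd bookkeeping now applied to the (up to) six corners of $H_\ell$, is the main obstacle: one must show the resulting count stays below $\max((d-1)^2+4,(d+1)(d+2)/2)$ for every admissible $\ell$ and all choices of $x,y,z,\dots$, and this is where the bulk of the case analysis lives. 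The estimate is plausible because $\vol(H_\ell)\le\tfrac34 d^2$ is strictly smaller than the area $\approx d^2$ of the rotated square, so the hexagonal types are never competitive asymptotically; the genuine work is controlling the constant terms for small $d$, where only the corner triangle attains the larger value.

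Finally, for sharpness I would simply exhibit the two extremal families: the corner triangle $\conv\{(0,0),(d,0),(0,d)\}$ of type (T1) realizes $(d+1)(d+2)/2$, and the rotated square $\conv\{(1,0),(d,1),(d-1,d),(0,d-1)\}$ of type (T2) realizes $(d-1)^2+4$, both being minimal by Theorem~\ref{thm_clas}. Since $(d-1)^2+4\ge(d+1)(d+2)/2$ exactly when $d^2-7d+8\ge0$, i.e.\ for $d\ge 6$, precisely one of the two families is extremal for each $d>1$, and together they show that the bound $\max((d-1)^2+4,(d+1)(d+2)/2)$ is attained.
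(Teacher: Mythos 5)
Your global strategy (reduce to the five families of Theorem \ref{thm_clas}, bound each family, exhibit the two extremal examples) matches the paper's, your type (T1) argument via Pick's formula is complete and correct, and your sharpness discussion is fine. But there are two genuine gaps, both at the places you yourself flag. First, for type (T2) the inequality $P+G\le(d-2)^2+4$ is only asserted: ``one then checks that lowering $P$ never lets $G$ grow fast enough to compensate'' is exactly the content that needs proving, and it is not routine. The naive bounds fail: $P\le(d-2)^2$ together with $G\le 2d$ (from $\gcd(a,b)\le\frac{a+b}{2}$ summed over the four edges) gives $P+G\le(d-2)^2+2d$, overshooting the target by $2d-4$, so a real trade-off argument between vertex positions and edge gcds would be required. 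Second, and more seriously, for types (T3), (T4), (T5) you only set up the count $\sharp(H_\ell\cap\ZZ^2)-\sum_c R_c$ and explicitly defer ``the bulk of the case analysis''; that is not a proof. Your asymptotic fallback cannot close it either: since $\ell^2+(d-\ell)^2\ge d^2/2$, one has $\sharp(H_\ell\cap\ZZ^2)\le\frac34d^2+\frac32d+1$, and this exceeds $(d-1)^2+4$ (and also $(d+1)(d+2)/2$) for every $d$ with $4\le d\le 12$, i.e.\ on a whole range where the hexagonal types exist. So the corner-cut corrections $R_c$ carry essential weight, not just ``constant terms for small $d$''.

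The missing idea, which makes all of this bookkeeping unnecessary, is the paper's elementary counting device: for a polygon of type (T2), (T4) or (T5) (and for (T1) with $x,y\neq 0$), at most four lattice points of $\Delta$ lie on the boundary of $d\square$, while every other lattice point of $\Delta$ lies in $(d\square)^{\circ}\cap\ZZ^2=\{1,\ldots,d-1\}\times\{1,\ldots,d-1\}$; this gives $\sharp(\Delta\cap\ZZ^2)\le(d-1)^2+4$ in one line, with no Pick or gcd computations. The only type for which this fails as stated is (T3), whose bottom edge lies on $\partial(d\square)$ and may carry many lattice points; the paper handles it by first applying a unimodular transformation sending that edge to the upper-left diagonal edge of $H_\ell$, after which the same four-boundary-point count applies. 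The corner triangles of type (T1) (where $xy=0$) are then counted separately by horizontal slices, $\sum_{k=0}^{d}(d-k+1)=(d+1)(d+2)/2$ --- or by your Pick argument, which covers all of (T1) uniformly and is a perfectly good substitute there. If you replace your (T2)--(T5) analysis by this boundary-versus-interior observation plus the (T3) transformation, your write-up becomes a complete proof.
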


\begin{proof}
Note that there exist minimal polygons attaining the bound (see Figure \ref{upperbound}): the simplex $\conv\{(0,0),(d,0),(0,d)\}$ is of type $(T1)$ and has $(d+1)(d+2)/2$ lattice points, and the quadrangle $\conv\{(1,0),(d,1),(d-1,d),(0,d-1)\}$ is of type $(T2)$ and has $(d-1)^2+4$ lattice points.

\begin{figure}[h!]
\centering
\includegraphics[height=3cm]{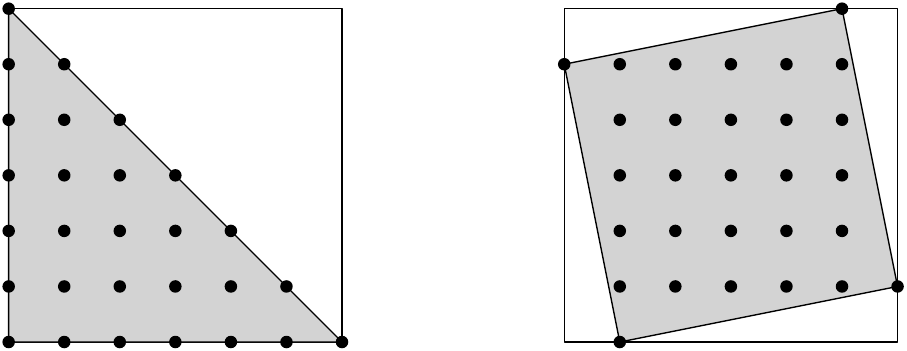}
\caption{Minimal polygons attaining the upper bound}
\label{upperbound}
\end{figure}
 
Now let's show that we indeed have an upper bound. If $\Delta$ is minimal of type $(T2)$, $(T4)$ or $(T5)$, then $\sharp(\Delta\cap\ZZ^2)\leq(d-1)^2+4$, since there are at most $4$ lattice points of $\Delta$ on the boundary of $d\square$ and all the others are in 
$$(d\square)^{\circ}\cap\ZZ^2 = \{1,\ldots,d-1\}\times \{1,\ldots,d-1\}.$$ 
This also holds for triangles of type $(T1)$ with $x$ and $y$ non-zero. If $\Delta$ is of type $(T3)$, we obtain the same upper bound $(d-1)^2+4$ after applying a unimodular transformation that maps the bottom edge of $\Delta$ to the left upper diagonal edge of $H_\ell$.
We are left with triangles of type $(T1)$ with either $x$ or $y$ zero. Assume that $y=0$ (the case $x=0$ is similar). Then $\Delta$ has the edge $[(0,0),(d,0)]$ in common with $d\square$ and its other vertex is $(x,d)$. For each $k\in\{0,\ldots,d\}$, the intersection of $\Delta$ with the horizontal line on height $k$ is a line segment of length $d-k$, hence it contains at most $d-k+1$ lattice points. 
So in total, $\Delta$ has at most $$\sum_{k=0}^d (d-k+1)=(d+1)(d+2)/2$$ lattice points. 
\end{proof}

\end{document}